\documentclass[12pt,a4paper]{amsart}
\usepackage{amsmath,amssymb,epsf,graphics}
\usepackage{amsfonts,amsxtra,amscd,verbatim,eucal}
\usepackage{enumerate}
\usepackage[all]{xy}

\newtheorem{thm}{Theorem}[section]
\newtheorem{lemma}{Lemma}
\newtheorem{proposition}{Proposition}

\newtheorem{defn}{Definition}
\newtheorem{question}{Question}

\theoremstyle{remark}
\newtheorem{rmk}{Remark}

\begin{document}
\author{Catriona Maclean\\
Institut Fourier\\ Universit\'e Grenoble Alpes.}
\title{Approximable algebras and a question of H. Chen.}
\maketitle
\begin{abstract}
In \cite{volumes_chen}, Huayi Chen introduces the notion of an approximable 
graded algebra, and asks if any such algebra is a sub-algebra of the graded
section ring of a big line bundle on an algebraic variety. We give a 
counter-example showing that this is not the case.
\end{abstract}
\section{Introduction}
The Fujita approximation theorem, \cite{fuj}, is an important result in 
algebraic geometry. It states that whilst the section ring associated to a 
big line bundle $L$ on an algebraic variety $X$ 
\[ R(L)\stackrel{\rm def}{=} \oplus_n H^0(nL, X)\]
is typically not a finitely generated algebra, it can be approximated
arbitrarily well by finitely generated algebras. More precisely,  we have that
\begin{thm}[Fujita]
Let $X$ be an algebraic variety and let $L$ be a big line bundle on $X$. 
For any $\epsilon>0$ there exists a birational modification 
\[\pi: \hat{X}\rightarrow X\]
and a decomposition of $\mathbb{Q}$ divisors, $\pi^* (L)= A+E$ such that
\begin{itemize}
\item $A$ is ample and $E$ is effective,
\item ${\rm vol}(A)\geq (1-\epsilon){\rm vol}(L)$.
\end{itemize}
\end{thm}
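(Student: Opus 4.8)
The plan is to realise the approximating ample divisor as a small perturbation of the mobile part of the linear system $|pL|$ for $p$ large, and to control its volume by means of the associated graded semigroup of sections. Replacing $X$ by a resolution of singularities — which changes neither $\mathrm{vol}(L)$ nor the statement, and alters the numbers $h^0(X,mL)$ only in orders below $m^{n}$ — I may assume from the start that $X$ is smooth and projective, of dimension $n$.

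First I would construct the mobile part. For each $p$ with $h^0(X,pL)\geq 2$ the system $|pL|$ defines a rational map $\phi_{|pL|}\colon X\dashrightarrow\mathbb{P}^{N_p}$; resolving its indeterminacy I obtain a smooth projective variety $\hat X_p$, a birational morphism $\pi_p\colon\hat X_p\to X$, and a decomposition $\pi_p^{*}(pL)=M_p+F_p$ in which $F_p$ is the fixed part and $M_p$ is globally generated (hence nef). The natural inclusions $H^0(\hat X_p,kM_p)\hookrightarrow H^0(\hat X_p,\pi_p^{*}(kpL))=H^0(X,kpL)$ are compatible with multiplication of sections and are an isomorphism for $k=1$, so
\[
h^0(\hat X_p,kM_p)\ \geq\ \dim\mathrm{Im}\big(S^{k}H^0(X,pL)\to H^0(X,kpL)\big)\qquad(k\geq 1),
\]
whence, dividing by $k^{n}/n!$ and letting $k\to\infty$, $\ \mathrm{vol}(M_p)\geq n!\,\limsup_{k}k^{-n}\dim\mathrm{Im}\big(S^{k}H^0(X,pL)\to H^0(X,kpL)\big)$.

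The crux — and the only substantial step — is to show that, given $\epsilon>0$, this lower bound exceeds $(1-\epsilon)p^{n}\mathrm{vol}(L)=(1-\epsilon)\mathrm{vol}(pL)$ for suitably large and divisible $p$. I would prove this via an Okounkov body. Fix a complete flag $X=Y_0\supset Y_1\supset\dots\supset Y_n=\{\mathrm{pt}\}$ of smooth subvarieties; let $\nu$ be the associated rank-$n$ valuation on nonzero sections of the multiples of $L$, let $\Gamma\subset\mathbb{N}\times\mathbb{Z}^{n}$ be its graded semigroup, and let $\Delta(L)$ be the Okounkov body, so that $\mathrm{vol}_{\mathbb{R}^n}\big(\Delta(L)\big)=\tfrac1{n!}\mathrm{vol}(L)$. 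Since $\#\nu(W)=\dim W$ for every finite-dimensional subspace $W\subset H^0(X,mL)$ and $\nu(s_1\cdots s_k)=\nu(s_1)+\dots+\nu(s_k)$, the image of $S^{k}H^0(X,pL)$ in $H^0(X,kpL)$ has dimension at least $\#(\Gamma_p+\dots+\Gamma_p)$, the $k$-fold sumset, where $\Gamma_p$ is the level-$p$ slice of $\Gamma$; but this sumset is exactly the level-$kp$ part of the sub-semigroup of $\Gamma$ generated by $\Gamma_p$, and by the growth theorem for finitely generated semigroups its cardinality is asymptotic to $c_p k^{n}$ with $c_p>0$ and $n!\,c_p/p^{n}\to\mathrm{vol}(L)$ as $p\to\infty$ — the last point because the convex hulls of $\tfrac1p\Gamma_p$ exhaust $\Delta(L)$. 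Hence $\mathrm{vol}(M_p)\geq n!\,c_p\geq(1-\epsilon)p^{n}\mathrm{vol}(L)$ for a suitable $p$. (Alternatively one may run Fujita's original counting argument, or follow \S 11.4 of Lazarsfeld's book; the substance is the same.)

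It remains to replace the nef, globally generated class $M_p$ — which is now big, by the preceding estimate — with a genuinely ample one. The morphism attached to $|M_p|$ may fail to be divisorial, so I first pass to a further blow-up $\rho\colon\hat X_p'\to\hat X_p$ along its exceptional locus, on which the induced morphism $\psi\colon\hat X_p'\to Y$ to a normal projective variety $Y$ — with $\rho^{*}M_p=\psi^{*}A_Y$ for some ample divisor $A_Y$ on $Y$ — has purely divisorial exceptional locus. By the negativity lemma there is then an effective exceptional $\mathbb{Q}$-divisor $B$ on $\hat X_p'$ with $-B$ $\psi$-ample; since $\rho^{*}M_p$ is the pullback of the ample $A_Y$, the standard fact that a $\psi$-ample divisor plus a sufficiently positive pullback is ample shows that $\rho^{*}M_p-\delta B$ is ample on $\hat X_p'$ for every sufficiently small rational $\delta>0$. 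Taking $A:=\tfrac1p(\rho^{*}M_p-\delta B)$ and $E:=\tfrac1p(\rho^{*}F_p+\delta B)$ we have $(\pi_p\circ\rho)^{*}L=A+E$ with $A$ ample and $E$ effective; and since the volume function is continuous on the big cone and $A$ stays big for $\delta$ small, $\mathrm{vol}(A)\to p^{-n}\mathrm{vol}(M_p)$ as $\delta\to 0$, so $\mathrm{vol}(A)\geq(1-2\epsilon)\mathrm{vol}(L)$ once $\delta$ is small enough. Relabelling $2\epsilon$ as $\epsilon$ and setting $\hat X=\hat X_p'$, $\pi=\pi_p\circ\rho$ concludes the proof. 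I expect the estimate of the third paragraph to be the true obstacle: the reductions, the mobile-part construction, the multiplicativity inequality, and the final nef-to-ample step are routine applications of standard positivity, whereas that estimate genuinely requires bounding, uniformly in $k$, the part of $H^0(X,kpL)$ not obtained by multiplying sections of $pL$, the whole point being that this defect becomes negligible only in the limit $p\to\infty$.
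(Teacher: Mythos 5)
This theorem is cited in the paper as background --- attributed to Fujita \cite{fuj}, with the remark that Lazarsfeld and Mustata gave a simple proof via Newton--Okounkov bodies \cite{LM} --- and is not proved there, so there is no in-paper proof to compare against. Your argument is precisely the Lazarsfeld--Mustata proof: mobile part $M_p$ of $|pL|$, the inequality $h^0(kM_p)\geq\dim\mathrm{Im}\big(S^kH^0(pL)\to H^0(kpL)\big)$, and a lower bound for the right-hand side via the $k$-fold sumset of the level-$p$ slice $\Gamma_p$ of the graded valuation semigroup, whose rescaled convex hulls exhaust $\Delta(L)$. The chain of ideas is sound. Two remarks. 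First, both the semigroup growth asymptotics (with $c_p>0$) and the exhaustion $\mathrm{conv}(\tfrac1p\Gamma_p)\nearrow\Delta(L)$ rely on $\Gamma$ generating $\mathbb{Z}^{n+1}$ as a group and containing a translate of a full-dimensional cone; this is exactly the content of conditions (2.3)--(2.5) in \cite{LM} and should be stated rather than assumed silently. Second, your final nef-to-ample step is correct but unnecessarily elaborate: once $M_p$ is known to be nef and big on the smooth projective $\hat X_p$, Kodaira's lemma gives $M_p\sim_{\mathbb{Q}}H+E'$ with $H$ ample and $E'$ effective, and then $M_p-\delta E'=(1-\delta)M_p+\delta H$ is ample (nef plus ample) for every $0<\delta<1$; no further blow-up, Stein factorisation, or negativity-lemma argument is needed, and continuity of the volume finishes exactly as you describe.
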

In \cite{LM}, Lazarsfeld and Mustata used the Newton-Okounkov body associated to
$A$ to give a simple proof of Fujita approximation. The Newton-Okounkov body, 
constructed in \cite{KK} and \cite{LM}, building on previous work of Okounkov
\cite{Okounkov}, is a convex body $\Delta_{Y_\bullet}(L,X)$ in 
$\mathbb{R}^d$ associated to the 
data of
\begin{itemize}
\item a $d$-dimensional variety $X$
\item an admissible flag $Y_\bullet$ on $X$
\item a big line bundle $L$ on $X$.
\end{itemize}
This convex body encodes information
on the asymptotic behaviour of the spaces of global sections $H^0(nL)$ for large
values of $L$. \\ \\
Lazarsfeld and 
Mustata's simple proof of Fujita approximation is based on the equality of 
volumes of Newton-Okounkov bodies
\begin{equation}\label{volumes} {\rm vol}(L)= d! {\rm vol}
(\Delta_{Y_\bullet}(L,X))\end{equation}
where we recall that the volume of a big line bundle on a $d$-dimesional variety
is defined by
\[ {\rm vol}(L)= \lim_{n\rightarrow \infty}\frac{ d! h^0(nL)}{n^d}.\]
One advantage of their approach to the Fujita theorem
is that Newton-Okounkov bodies are not 
only defined for section algebras $R(L)$, but also for any graded sub-algebra of
section algebras. Lazarsfeld and Mustata give combinatorical 
conditions (conditions 2.3-2.5 of \cite{LM}) under which equation
\ref{volumes} holds for a graded sub-algebra $\oplus_n B_n\subset R(L)$
and show that these conditions hold if 
the subalgebra $\oplus_n B_n$ 
{\it contains an ample series}.\footnote{Ie. 
if there exists an ample divisor $A\leq L$ such that $\oplus_n H^0(\lfloor
nA \rfloor)\subset B$}\\ \\
Di Biagio and Pacenzia in \cite{dBP}
subsequently used Newton-Okounkov bodies associated to 
restricted algebras to prove a Fujita approximation theorem for restricted 
linear series, ie. subalgebras of $\oplus_n H^0(nL|_V,V)$ obtained as the
restriction of the complete algebra $\oplus_n H^0(nL,X)$, where $V\subset X$ is a 
subvariety. \\ \\
In \cite{volumes_chen}, Huayi Chen uses Lazarsfeld and Mustata's work on 
Fujita approximation to
prove a Fujita-type approximation theorem in the arithmetic setting.
In the course of this work he defines the notion of approximable 
graded algebras, which are exactly those algebras for which
a Fujita-type approximation theorem hold. 
\begin{defn}
An integral graded algebra $B=\oplus_n B_n$ is approximable if and only if
the following conditions are satisfied.
\begin{enumerate}
\item all the graded pieces $B_n$ are finite dimensional.
\item for all sufficiently large $n$ the space $B_n$ is non-empty
\item  
for any $\epsilon$ there exists an $p_0$ such that for all $p\geq p_0$ 
we have that 
\[ \liminf_{n\rightarrow \infty} \frac{ {\rm dim}({\rm Im}(S^nB_p\rightarrow B_{np}))}{{\rm dim}(B_{np})}> (1-\epsilon). \]
\end{enumerate}
\end{defn}
In his paper \cite{volumes_chen} 
Chen asks the following question.
\begin{question}[Chen]
Let $B=\oplus_n B_n$ be an approximable graded algebra. Does there always exist 
a variety $X$ and a big line bundle $L$ such that $B$ can be included as a 
subalgebra of $R(L)$ ?
\end{question}
We will say that an approximable algebra for which the answer is ``no'' is {\it non-sectional}.\\ \\
The aim of this note is 
to prove the following theorem
\begin{thm}\label{maintheorem}
There exists an approximable graded algebra $B=\oplus_n B_n$ which is 
non-sectional.  
\end{thm}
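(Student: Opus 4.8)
The plan is to realise $B$ as a graded algebra of rational functions on $\P^1$ with prescribed poles that accumulate at infinitely many points, so that no single line bundle can control all of these poles at once. Fix an algebraically closed field $k$, an indeterminate $\theta$, and pairwise distinct closed points $Q_1,Q_2,\dots$ of $\P^1=\P^1_k$. For $n\ge 0$ put
\[
D_n \;:=\; \sum_{i\ge 1}\big\lfloor 2^{-i}n\big\rfloor\,[Q_i]\;\in\;\mathrm{Div}(\P^1)
\]
(a finite sum for each $n$), let $V_n:=H^0\big(\P^1,\mathcal O(D_n)\big)\subseteq k(\P^1)$, and set
\[
B\;:=\;\bigoplus_{n\ge 0}V_n\,\theta^{\,n}\;\subseteq\;k(\P^1)[\theta].
\]
Since $\lfloor x\rfloor+\lfloor y\rfloor\le\lfloor x+y\rfloor$ we have $D_m+D_n\le D_{m+n}$, hence $V_mV_n\subseteq V_{m+n}$; thus $B$ is a graded subring of the field $k(\P^1)(\theta)$, hence an integral graded algebra, with $B_0=k$ and a genuine grading (by the powers of $\theta$, so there is no issue of non‑homogeneous relations).

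First I would check that $B$ is approximable. By Riemann--Roch on $\P^1$, $\dim B_n=\deg D_n+1$, which is finite and nonzero for every $n$, and $\tfrac12 n-1\le\deg D_n\le\sum_{i\ge1}2^{-i}n=n$. For condition (3), the image of $S^nB_p\to B_{np}$ is $\big((V_p)^{\cdot n}\big)\theta^{np}$, and since multiplication maps $H^0(\P^1,\mathcal O(E))^{\otimes n}\to H^0(\P^1,\mathcal O(nE))$ are surjective for effective $E$, we get $(V_p)^{\cdot n}=H^0(\P^1,\mathcal O(nD_p))$, of dimension $n\deg D_p+1$. Using $\deg D_{np}\le np$ this gives, for every $n$,
\[
\frac{\dim\mathrm{Im}(S^nB_p\to B_{np})}{\dim B_{np}}
\;=\;\frac{n\deg D_p+1}{\deg D_{np}+1}\;\ge\;\frac{n\deg D_p+1}{np+1}\;\ge\;\frac{\deg D_p}{p},
\]
and $\deg D_p/p\to \sum_{i\ge1}2^{-i}=1$ as $p\to\infty$. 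Hence for every $\epsilon>0$ all sufficiently large $p$ satisfy the required inequality, and $B$ is approximable.

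The heart of the matter is to show that $B$ is non‑sectional. Suppose $\phi\colon B\hookrightarrow R(L)=\bigoplus_nH^0(nL,X)$ is a graded embedding with $L$ big on a variety $X$; replacing $X$ by its normalisation (which only enlarges $R(L)$ and preserves bigness) we may take $X$ normal. Extending $\phi$ to graded fraction fields, its degree‑$0$ part is a $k$‑embedding $k(\P^1)=\mathrm{Frac}^{\mathrm{gr}}(B)_0\hookrightarrow\mathrm{Frac}^{\mathrm{gr}}(R(L))_0=k(X)$ — the last equality because bigness of $L$ makes $\phi_{|mL|}$ birational onto its image for $m\gg0$ — i.e.\ a dominant rational map $v$ from $X$ to $\P^1$. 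Moreover $w:=\phi(\theta)\in H^0(L)$ is a nonzero section, so $E_0:=\mathrm{div}(w)$ is an effective divisor with $\mathcal O(E_0)\cong L$, and $\mathrm{Frac}^{\mathrm{gr}}(R(L))=k(X)[w,w^{-1}]$; hence for $f\in V_n$ the element $\phi(f\theta^n)=(v^\ast f)\,w^n\in H^0(nL)$ has, as a rational section of $nL$, divisor $\mathrm{div}_X(v^\ast f)+nE_0\ge0$. Now for all but finitely many $i$ there is a prime divisor $\Gamma_i\subset X$ lying over $Q_i$ (i.e.\ with centre $Q_i$ on $\P^1$), so $\mathrm{ord}_{\Gamma_i}(v^\ast f)=e_i\,\mathrm{ord}_{Q_i}(f)$ with $e_i\ge1$. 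For large $n$ choose $f\in V_n$ with a pole of order exactly $\lfloor 2^{-i}n\rfloor$ at $Q_i$ (possible since $|D_n|$ is complete); then $\mathrm{ord}_{\Gamma_i}(E_0)\ge e_i\lfloor 2^{-i}n\rfloor/n$, and letting $n\to\infty$ gives $\mathrm{ord}_{\Gamma_i}(E_0)\ge e_i2^{-i}>0$, whence $\mathrm{ord}_{\Gamma_i}(E_0)\ge1$. Thus $E_0$ would have the infinitely many distinct components $\Gamma_1,\Gamma_2,\dots$, which is impossible.

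The delicate points are twofold. The first is the balancing act in the construction: approximability forces the "pole rates" $r_i$ at the $Q_i$ to sum to $1$ (otherwise $\deg D_p/p\not\to1$ and condition (3) fails), while non‑sectionality needs $r_i>0$ for infinitely many $i$; the choice $r_i=2^{-i}$ threads this needle while keeping $\dim B_n$ of the expected linear growth and $\dim B_{np}\le np$. The second, and conceptually the main obstacle, is that the obstruction to being sectional must be insensitive to $\dim X$, since $X$ could be arbitrarily large and one cannot argue on a fixed model. The resolution is that the "base curve" $\P^1$ is recovered from $B$ intrinsically, as the smooth projective curve with function field $\mathrm{Frac}^{\mathrm{gr}}(B)_0$; any such $X$ must dominate it, distinct points of it pull back to divisors of $X$ with disjoint supports, and the single divisor $E_0$ — which must dominate the poles of \emph{all} the sections $\phi(B_n)$ simultaneously — has only finitely many components.
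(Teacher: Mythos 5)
Your proof is correct and takes essentially the same approach as the paper: you construct $B$ inside $k(\P^1)$ with pole rates $2^{-i}$ at distinct points, verify approximability by the same dimension count, and obtain non-sectionality by noting that any graded embedding into $R(L)$ induces a dominant rational map $X\dashrightarrow \P^1$, under which the unboundedly many poles of sections of $B$ would force the zero divisor of $\phi(\theta)\in H^0(L)$ to have infinitely many distinct vertical components. The paper packages this last step as a standalone criterion (its Proposition~\ref{criterion}, stated in terms of infinitely many valuations of $K^{\rm gr}(B)$ being negative somewhere on $B/b_1^n$) before constructing the example, whereas you fold the criterion directly into the argument; the underlying mechanism is the same.
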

The basic criterion used will be the 
following : if $B$ is an approximable graded algebra, we can consider the
associated field of homogeneous quotients $K^{\rm gr}(B)$, into which, after 
choice of a base element $b\in B_1$, the algebra $B$ can be included.
\\ \\
If $B$
is a graded sub-algebra of $\oplus_n H^0(nL)$ for some big line bundle $L$ then
the number of valuations of $K^{\rm gr}(B)$ which take negative values on 
certains elements  of $B$ is finite. \\ \\
In section 2) below we will state and prove our criterion for 
non-sectionality, whereas in section 3) we construct the non-sectional approximable algebra. \\ \\
Note finally that while our counter example is not associated to a line bundle,
it is of the form
\[ \oplus_n H^0(\lfloor nD \rfloor)\]
for a certain  infinite divisor $D=\sum_{i=0}^\infty  a_i D_i$ with the property that 
$\lfloor nD \rfloor$ is finite for any choice of $n$. This poses the following
natural question.
\begin{question}
Let $B=\oplus_n B_n$ be a graded approximable algebra. Does there necessarily exist a variety $X$ and a 
countable formal sum  \[D=\oplus_i a_i D_i\] of divisors on $X$ 
such that for all $n$ the integral divisor $\lfloor nD\rfloor$ is finite and $B$ is isomorphic to the
graded algebra $\oplus_n \mathcal{O}\left(\lfloor nD\rfloor\right)$ ?
\end{question}
\section{A criterion for not-sectionality.}
In this section, we will give a  criterion
for non-sectionality of a approximable graded algebra in terms of valuations on the
associated field of homogeneous degree-zero rational functions.
\begin{defn}
Let $B=\oplus_n B_n$ be an integral graded algebra. 
We define its graded field of
functions by
\[ K^{\rm gr}(B)=\left\{ \frac{f}{g}| \exists n\mbox{ s.t. } f,g\in B_n\right\}
\big{/}\sim\]
where $ \frac{f_1}{g_1}\sim \frac{f_2}{g_2} \mbox{ iff } f_1g_2=f_2g_1$.
\end{defn}
We now state our criterion.
\begin{proposition}\label{criterion}
Let $\oplus_n B_n$ be an approximable graded algebra with $B_0=\mathbb{C}$. 
Suppose that the graded field of functions $K^{\rm gr}(B)$ is a
finitely generated field extension of $\mathbb{C}$ 
with transcendence degree one and there is a 
choice of element $b_1\in B_1$ such that the set
\[ \nu_{B, b_1}=\left\{ 
\nu\mbox{ valuation } on K^{\rm gr}(B)| 
\exists n\in \mathbb{N}, b'\in B_n \mbox{  such that } 
\nu_B\left(\frac{b'}{b_1^n}\right)<0\right\}\]
is infinite. Then there does not exist a big line bundle $L$ on a variety $X$ such 
that $\oplus_n B_n$ is isomorphic as a graded algebra 
to a graded sub-algebra of $\oplus_n H^0(nL,X)$.
\end{proposition}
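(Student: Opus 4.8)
The plan is to argue by contradiction, so suppose that $\oplus_n B_n$ is isomorphic, as a graded algebra, to a graded subalgebra of $\oplus_n H^0(nL,X)$ for some big line bundle $L$ on a variety $X$. I would first carry out a sequence of birational reductions. Replacing $X$ by a resolution of singularities of its normalisation and $L$ by its pull-back does not change $\oplus_n H^0(nL)$ (since $\pi_*\mathcal{O}_{X'}=\mathcal{O}_X$ for $\pi$ birational onto a normal variety) nor the bigness of $L$ (the volume is a birational invariant), so we may assume $X$ is smooth projective. Under the given inclusion, $b_1$ maps to a section $s_1\in H^0(L)$, and $K^{\rm gr}(B)$ is identified with the subfield of $\C(X)$ generated by the ratios $s'/s_1^n$, where $s'$ is the image of an element of $B_n$. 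By hypothesis this subfield is finitely generated of transcendence degree one over $\C$, hence is the function field of a unique smooth projective curve $C$, and the inclusion $\C(C)=K^{\rm gr}(B)\hookrightarrow\C(X)$ is induced by a dominant rational map $X\dashrightarrow C$. Blowing up $X$ further (which again only enlarges the section spaces and preserves bigness) we may assume this is a \emph{morphism} $\psi\colon X\to C$, which is then surjective.

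The elementary point driving the proof is that the pole divisors of all the rational functions $s'/s_1^n\in\C(X)$, as $b'$ runs over $B_n$ and $n$ over $\N$, are supported on the \emph{fixed} divisor $E\deq\operatorname{div}(s_1)$: indeed $s'\in H^0(nL)$ forces $\operatorname{div}(s'/s_1^n)=\operatorname{div}(s')-n\operatorname{div}(s_1)\geq -nE$, so $s'/s_1^n$ can only have poles along the finitely many prime components $E_1,\dots,E_r$ of $E$.

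Now take any valuation $\nu\in\nu_{B,b_1}$. Since $K^{\rm gr}(B)=\C(C)$ has transcendence degree one over $\C$ and $\nu$ is nontrivial, $\nu=\operatorname{ord}_P$ for a closed point $P\in C$. By definition of $\nu_{B,b_1}$ there is $b'\in B_n$ with $\operatorname{ord}_P(g)<0$, where $g\in\C(C)$ is the function with $\psi^*g=s'/s_1^n$. Writing $\operatorname{div}_C(g)=-mP+D_0$ with $m>0$ and $P\notin\operatorname{Supp}(D_0)$, we get $\operatorname{div}_X(\psi^*g)=\psi^*\operatorname{div}_C(g)=-m\,\psi^*P+\psi^*D_0$, and since $\psi$ is surjective the effective divisor $\psi^*P$ is nonzero and its components are disjoint from $\operatorname{Supp}(\psi^*D_0)$; hence $s'/s_1^n$ has a pole along some prime divisor $F$ with $\psi(F)=\{P\}$. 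By the previous paragraph $F$ is one of $E_1,\dots,E_r$, so $P\in\{\psi(E_i):\psi(E_i)\text{ is a point}\}$, a finite set not depending on $\nu$. Therefore $\nu_{B,b_1}$ is finite, contradicting the hypothesis and proving the proposition.

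I expect the only genuinely delicate part to be the package of birational reductions in the first paragraph: one must check that passing to the normalisation, to a resolution, and to a resolution of the rational map $X\dashrightarrow C$ alters neither the algebra $\oplus_n H^0(nL)$, nor the bigness of $L$, nor the identification of $K^{\rm gr}(B)$ with a subfield of the function field, and hence leaves the set $\nu_{B,b_1}$ unchanged; once a genuine morphism $\psi\colon X\to C$ is in hand, the divisor-theoretic bookkeeping above is routine.
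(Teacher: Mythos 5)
Your proof is correct and takes essentially the same approach as the paper's: argue by contradiction, reduce via birational modifications to a surjective morphism $\psi\colon X\to C$ onto a smooth curve with $K(C)\cong K^{\rm gr}(B)$, identify valuations in $\nu_{B,b_1}$ with points of $C$, and observe that a pole of $i(b')/\sigma_1^n$ along a fibre $\psi^{-1}(P)$ forces $\sigma_1$ to vanish there, which can happen for only finitely many $P$. Your explicit formulation of the key step — that the pole divisors of all the $s'/s_1^n$ are supported on the fixed divisor $\operatorname{div}(s_1)$, since $\operatorname{div}(s'/s_1^n)\geq -n\operatorname{div}(s_1)$ — is a marginally cleaner packaging of the same observation the paper makes in its final paragraph.
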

\begin{proof}
Suppose that on the contrary we have a big line bundle $L$ on a complex variety
$X$  and a graded inclusion $i: B\rightarrow R(L)$. This gives
rise to a field inclusion 
\[ i^{\rm gr} : K^{\rm gr}(B)\rightarrow K^{\rm gr}(L)= K(X).\]
After blowing up, we obtain a birational morphism 
$\pi^{\rm sm}: X^{\rm sm}\rightarrow 
X$ from a {\it smooth} variety $X^{\rm sm}$. Consider the pull-back 
$\pi^{\rm sm *}(L)$: there is an injective pull-back morphism
$\pi^{\rm sm *} : R(L)\rightarrow R(\pi^*(L))$. Replacing $X$ with $X^{\rm sm}$ and 
$i^{\rm gr}$ with $\pi^*\circ i^{\rm gr}$, we may assume that the variety $X$
is smooth. \\ \\
We have a distinguished element $b_1\in B_1$, and an associated element 
$\sigma_1= i^{\rm gr}(b_1)\in H^0(L)$, so there are induced injective morphisms
\[ \phi_B: B\hookrightarrow K^{\rm gr}(B)\]
\[\phi_L:  R(L)\hookrightarrow K^{\rm gr}(L)=K(X)\]
given by  $\phi_B(b_n)= \frac{b_n}{b_1^n}$ for all $b_n\in B_n$ and 
$\phi_L(\sigma_n)=\frac{\sigma_n}{(\sigma_1)^n}$ for all $\sigma_n\in  H^0        (nL)$. There is also an induced 
inclusion \[ i^{\rm gr}: K^{\rm gr}(B)\rightarrow 
K^{\rm gr}(X)\] given by $i^{\rm gr}\left(\frac{f}{g}\right)=\frac{i(f)}{i(g)}$.
By assumption,  $K^{\rm gr}(B)$ is finitely generated as a field extension of 
$\mathbb{C}$ and is of transcendence degree $1$ so
there is a unique smooth complex curve $C$ such that 
$K^{\rm gr}(B)\sim K(C)$. After fixing an isomorphism 
$I: K(C)\rightarrow K^{\rm gr}(B)$  we have an inclusion
\[ K(C)\stackrel{ i^{\rm gr}\circ I}{\hookrightarrow} K(X)\]
and an induced rational morphism with dense image
\[\pi_X:  X---> C\]
such that $i^{\rm gr}\circ I= \pi_X^*$. After birational modification, 
we may assume that the map $\pi_X$ is in fact
a surjective morphism. \\ \\
Valuations of $K(C)$ correspond to points of the curve $C$. Consider the set of 
valuations in $\nu_{B, b_1}$, which by hypothesis is infinite 
\[ \nu_{B,b_1}= \{\nu_1,\nu_2,\ldots, \nu_m,\ldots\}\] and consider the set of 
associated points in $C$, \[\{p_1,p_2,\ldots, p_m,\ldots,\}\]
having the property that $\nu_j\circ I= \nu_{p_j}$. 
For any $j$ we know that there exists a positive integer $n(j)$
and an element $b_j\in B_{n(j)}$ for some $n$ such that 
\[\nu_j\left(\frac{b_j}{b_1^{n(j)}}\right)<0.\] There exists a meromorphic function on 
$C$, $f$, such that  $\frac{b_j}{b^{n(j)}}=I(f)$.  Consider 
$i^{\rm gr}(\frac{b_n}{b^n})\in K(X)$ : we have that 
\[i^{\rm gr}\left(\frac{b_n}{b^n}\right) = i^{\rm gr}(I(f))= \pi_X^{*}(f)\]
and hence
\[ \pi_X^*(f)=
\frac{i^{gr}(b_j)}{i^{gr}(b_1)^{n(j)}}.\] We know that 
$\nu_{p_j}(f)= (\nu_j\circ I(f))<0$ or in other words $f$ has a pole at the 
point $p_j$. \\ \\
It follows that \[i^{\rm gr} \left(\frac{(b_j)}{(b_1)^{n(j)}}\right)=
\frac{i(b_j)}{\sigma_1^{n(j)}}\] has a pole along 
$\pi_X^{-1}(p_j)$ which is only possible if $\sigma_1$ has a zero along the 
divisor $\pi_X^{-1}(p_j)$. But this can only be the case for 
a finite number of points $p_j$. \\ \\
This completes the proof of the Proposition \ref{criterion}.
\end{proof}
\section{Construction of the example.}
We will now construct our example of an approximable algebra $B=\oplus_n B_n$ 
such which satisfies the condition in Lemma \ref{criterion}.\\ \\
We will start by associating to any number $n$ an element of 
$\mathbb{N}^\mathbb{N}$. 
\begin{defn}
Let $n$ be a natural number. We denote by $I(n)$ the sequence
\[ I(n)= \left( n, \lfloor \frac{n}{2} \rfloor, \lfloor \frac{n}{4} \rfloor,
\lfloor \frac{n}{8} \rfloor, \ldots \right).\]
We further denote by $J(n)$ the sum of the elements of $I(n)$, ie.
\[ J(n)= n+ \lfloor \frac{n}{2} \rfloor+ \lfloor \frac{n}{4} \rfloor,
\lfloor \frac{n}{8} \rfloor + \ldots\]
\end{defn}
We will require the following superadditivity property of $I(n)$.
\begin{lemma}
For all integers $n$ and $m$ we have that \[I(n)+I(m)\leq I(n+m)\]
where we consider that $v\leq w$ if $v_i\leq w_i$ for every integer î
\end{lemma}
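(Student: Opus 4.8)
The plan is to unwind the definition so that the asserted vector inequality becomes a family of scalar inequalities, one for each coordinate, and then dispatch each of them with the elementary superadditivity of the floor function. Concretely, writing the coordinates of a sequence with index $i\ge 0$, the $i$-th entry of $I(n)$ is $\lfloor n/2^i\rfloor$ (with the convention that the $0$-th entry is $n$ itself). So the claim $I(n)+I(m)\le I(n+m)$ is exactly the assertion
\[ \left\lfloor \frac{n}{2^i} \right\rfloor + \left\lfloor \frac{m}{2^i} \right\rfloor \;\le\; \left\lfloor \frac{n+m}{2^i} \right\rfloor \fall i\ge 0. \]

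The key step is the standard lemma that $\lfloor x\rfloor+\lfloor y\rfloor\le \lfloor x+y\rfloor$ for all real $x,y$. I would prove this by writing $x=\lfloor x\rfloor+\{x\}$ and $y=\lfloor y\rfloor+\{y\}$ with $0\le\{x\},\{y\}<1$, so that $x+y=\bigl(\lfloor x\rfloor+\lfloor y\rfloor\bigr)+\bigl(\{x\}+\{y\}\bigr)$ with $\{x\}+\{y\}\ge 0$; since $\lfloor x\rfloor+\lfloor y\rfloor$ is an integer, taking floors of both sides gives the inequality. Applying this with $x=n/2^i$ and $y=m/2^i$ yields the $i$-th coordinate inequality above, and hence the lemma. (For $i=0$ one in fact has equality, both sides being $n+m$; this is just a sanity check, not a separate case.)

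There is essentially no serious obstacle here: the only thing to be careful about is fixing the indexing/convention for $I(n)$ consistently — in particular that the leading entry $n$ is the $i=0$ term $\lfloor n/2^0\rfloor$ — so that the reduction to the floor inequality is clean. Everything else is the one-line fractional-part argument above, applied coordinatewise.
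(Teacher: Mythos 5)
Your proposal is correct and follows essentially the same route as the paper: reduce the vector inequality to the coordinatewise statement $\lfloor n/2^i\rfloor+\lfloor m/2^i\rfloor\le\lfloor (n+m)/2^i\rfloor$ and invoke superadditivity of the floor function, which the paper simply cites as an elementary fact and you spell out via fractional parts.
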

\begin{proof}
Immediate from the elementary fact that $\lfloor r+s\rfloor \geq \lfloor r \rfloor + \lfloor s \rfloor$ for any real numbers $r$ and $s$. \end{proof}
Note that in particular $J(n)+J(m)\leq J(n+m)$ for all integers $n$ and $m$.
\\ \\
We now choose an infinite sequence of distinct points in $\mathbb{C}$ which we 
denote by $z_0,\ldots, z_m\ldots$. Let $x$ be a variable and consider the
vector
\[ {\bf a}= \left((x-z_0), (x-z_1),\ldots \right).\]
We associate to any integer $n$ the polynomial in $x$
\[ P_n(x)= {\bf a}^{I(n)}= \prod_{i=0}^\infty 
(x-z_i)^{\lfloor \frac{n}{2^i}\rfloor}\]
and we are now in a position to define our algebra.
\begin{defn}
We define $B_n$ by $B_n\subset \mathbb{C}(x)$, 
\[ B_n= \left\{ \frac{Q(x)}{P_n(x)}| Q(x)\mbox{ polynomial of degree $J(n)$ }.\right\}.\]
The algebra $B$ is then the graded algebra $B=\oplus_nB_n$  
\end{defn}
We note that $\oplus_n B_n$ is an algebra because for all $n$ and $m$ we have 
that $I(n)+I(m)\leq I(n+m)$ so that $P_n\times P_m | P_{n+m}$.
\begin{rmk} Note that while each of the subsets $B_n$ is defined as a subset of 
$\mathbb{C}(x)$, the global algebra $\oplus_n B_n$  is not, since the subsets
$B_n, B_m\subset \mathbb{C}(x)$ are not typically disjoint.
\end{rmk}
We now show that  $B$ is approximable.
\begin{proposition}
The algebra $B=\oplus_n B_n$ is approximable.
\end{proposition}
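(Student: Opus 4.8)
The plan is to verify in turn the three conditions in the definition of approximability. Conditions (1) and (2) are immediate: reading ``polynomial of degree $J(n)$'' as ``of degree at most $J(n)$'', the assignment $Q\mapsto Q/P_n$ identifies $B_n$ with the space of polynomials of degree $\le J(n)$, so $\dim B_n=J(n)+1<\infty$, and $B_n\neq 0$ for every $n\ge 0$ (with $\dim B_0=1$, consistent with $B_0=\mathbb C$). All the content is in condition (3), and the first step I would take is to give a completely explicit description of ${\rm Im}(S^nB_p\to B_{np})$.

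By the superadditivity lemma, $n\cdot I(p)\le I(np)$ coordinatewise, so $P_p^{\,n}$ divides $P_{np}$; set $R_{n,p}:=P_{np}/P_p^{\,n}\in\mathbb C[x]$, a polynomial of degree $J(np)-nJ(p)$. A product $b_1\cdots b_n$ of elements $b_j=Q_j/P_p\in B_p$ equals $(Q_1\cdots Q_n)\,R_{n,p}/P_{np}$, and since $\deg(Q_1\cdots Q_n)\le nJ(p)$ the numerator has degree at most $nJ(p)+(J(np)-nJ(p))=J(np)$, so this indeed lies in $B_{np}$ (this degree bookkeeping is the same computation, via $J(n)+J(m)\le J(n+m)$, that shows $\oplus_n B_n$ is an algebra). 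Hence ${\rm Im}(S^nB_p\to B_{np})$ is exactly $R_{n,p}\cdot W_{n,p}/P_{np}$, where $W_{n,p}\subset\mathbb C[x]$ is the linear span of all products of $n$ polynomials each of degree $\le J(p)$. The key elementary observation is that $W_{n,p}$ equals the full space of polynomials of degree $\le nJ(p)$: it is contained in that space for degree reasons, and conversely it contains every monomial $x^m$ with $0\le m\le nJ(p)$, because any such $m$ can be written as $a_1+\cdots+a_n$ with each $a_j\in[0,J(p)]$ and then $x^m=x^{a_1}\cdots x^{a_n}$. Since multiplication by the nonzero polynomial $R_{n,p}$ is injective, this yields $\dim{\rm Im}(S^nB_p\to B_{np})=nJ(p)+1$, while $\dim B_{np}=J(np)+1$.

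It then remains to analyse $\dfrac{nJ(p)+1}{J(np)+1}$ as $n\to\infty$ and $p\to\infty$. For this I would use the two-sided bound $2m-\log_2 m-3<J(m)\le 2m$, obtained by comparing $J(m)=\sum_{i\ge 0}\lfloor m/2^i\rfloor$ with the geometric series $\sum_{i\ge 0}m/2^i=2m$ and estimating the $\lfloor\log_2 m\rfloor+1$ nonzero terms from below by $m/2^i-1$. Consequently $J(m)/m\to 2$, so for fixed $p$ one has $J(np)/n\to 2p$ and therefore $\liminf_{n\to\infty}\dfrac{nJ(p)+1}{J(np)+1}=\dfrac{J(p)}{2p}$. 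Since $\dfrac{J(p)}{2p}\to 1$ as $p\to\infty$, given $\epsilon>0$ one chooses $p_0$ so that $\dfrac{J(p)}{2p}>1-\epsilon$ for all $p\ge p_0$, which is precisely condition (3).

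I do not expect a serious obstacle. The only point that requires genuine care is the identification of ${\rm Im}(S^nB_p\to B_{np})$: checking simultaneously that products of $B_p$-elements land in $B_{np}$ (a degree count) and that their span is as large as it could be, i.e. that $W_{n,p}$ fills out all polynomials of degree $\le nJ(p)$. Once this is established, the conclusion is just the asymptotic $J(m)\sim 2m$.
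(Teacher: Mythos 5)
Your proof is correct and follows the same route as the paper: identify the image of $S^n B_p$ in $B_{np}$ as the span of $\{Q/P_p^n : \deg Q\le nJ(p)\}$ of dimension $nJ(p)+1$, compare with $\dim B_{np}=J(np)+1$, and conclude via the asymptotic $J(m)\sim 2m$. The only addition is your explicit verification (via $W_{n,p}$ and the monomial-product argument) that the image really is the full space $\mathbb{C}_{nJ(p)}[x]/P_p^n$, a step the paper asserts without comment, and your bound $2m-\log_2 m-3<J(m)\le 2m$ replaces the paper's equivalent estimate $2p-J(p)\le\log_2 p+2p/\log_2 p$.
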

\begin{proof}
The conditions (1) and (2) of approximability are immediate. We turn to
the proof of condition (3).\\ \\
Note that the image in $B_{pn}$ of ${\rm Sym}^n(B_p)$ is given by 
\[\frac{\mathbb{C}_{nJ(p)}[x]}{P_p(x)^n}\] which is of dimension $nJ(p)+1$ and 
that $B_{np}$ is itself  \[\frac {\mathbb{C}_{J(pn)}[x]}{P_{pn}}\] which is of dimension
$J(pn)+1$. We consider therefore the ratio 
\[ \frac{nJ(p)+1}{J(np)+1}\]
and we will show that for sufficiently large values of $p$ this ratio can be
made arbitrarily close to $1$.\footnote{Which is a slightly stronger condition than than required for approximability.} \\ \\
Note first that by the super-additivity property of $J$ we have that for all 
$p$ and $n$ \[nJ(p)\leq J(pn).\]
Moreover for all $n$ we have that $J(n)\leq 2n$.  
Set $m=\lfloor \log_2(p)\rfloor $. We have that 
\begin{equation*}
\begin{aligned} 
2p-J(p)& =&\sum_{k=0}^\infty \left( \frac{p}{2^k}-\lfloor \frac{p}{2^k}\rfloor\right)\\
       & =& \sum_{k=0}^m \left( \frac{p}{2^k}-\lfloor \frac{p}{2^k}\rfloor\right) +
\sum_{k=m+1}^\infty \left( \frac{p}{2^k}-\lfloor \frac{p}{2^k}\rfloor\right)\\
 &\leq & \sum_{k=1}^m  1 +\sum_{k=m+1}^\infty \frac{p}{2^{k+1}}\\
& = & m+ \frac{p}{2^m} \\
&\leq & \log_2(p)+2\frac{p}{\log_2(p)}.
\end{aligned}
\end{equation*}
In particular, $\lim_{p\rightarrow\infty} \frac{2p-J(p)}{p}=0.$ Suppose that $p_0$  such that for all $p\geq p_0$ we have that 
$2-\frac{J(p)}{p} \leq \epsilon$ so that for all $n$ and all $p\geq p_0$ 
\[ n(2-\epsilon)p\leq nJ(p)\leq J(pn)\leq 2pn\]
from which it follows that $ (1-\epsilon) \leq \frac{nJ(p)}{J(pn)}\leq 1$ 
and hence 
\[(1-\epsilon) \leq \frac{nJ(p)+1}{J(pn)+1}\leq 1\]
This completes the proof of the proposition.
\end{proof}
We will now identify $K^{\rm gr}(B)$
\begin{lemma}
Let $B$ be the algebra constructed above. Then $\mathbb{C}^{\rm gr}(B)=\mathbb{C}(x)$.
\end{lemma}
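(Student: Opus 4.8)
The plan is to show both inclusions $K^{\rm gr}(B)\subseteq \mathbb{C}(x)$ and $\mathbb{C}(x)\subseteq K^{\rm gr}(B)$. The first inclusion is essentially a triviality: every element of $B_n$ is by definition a rational function $Q(x)/P_n(x)$ in $\mathbb{C}(x)$, and since ratios $f/g$ with $f,g\in B_n$ are themselves elements of $\mathbb{C}(x)$, the field $K^{\rm gr}(B)$ embeds naturally into $\mathbb{C}(x)$. One must be slightly careful because of the Remark: the abstract graded algebra $\oplus_n B_n$ is not literally a subalgebra of $\mathbb{C}(x)$ (the $B_n$ overlap), so the correct statement is that the map $K^{\rm gr}(B)\to\mathbb{C}(x)$ sending $f/g$ (with $f,g\in B_n$) to the quotient of the two rational functions is a well-defined field homomorphism, hence injective.

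The content is therefore the reverse inclusion: $\mathbb{C}(x)\subseteq K^{\rm gr}(B)$, i.e. the image of this homomorphism is all of $\mathbb{C}(x)$. Since $K^{\rm gr}(B)$ is a subfield of $\mathbb{C}(x)$ containing $\mathbb{C}$, it suffices to exhibit the function $x$ itself as a ratio $f/g$ with $f,g\in B_n$ for some single $n$. I would take $n=1$. Then $P_1(x)=\prod_i (x-z_i)^{\lfloor 1/2^i\rfloor} = (x-z_0)$, and $J(1)=1$, so $B_1 = \{ Q(x)/(x-z_0) : \deg Q = 1\}$. Both $x/(x-z_0)$ and $1/(x-z_0)$... wait, $1$ has degree $0$, not $1$, so instead take $g = (x-z_0)/(x-z_0)\cdot$, more precisely pick $f = x(x-z_0)/P_1 = x$ — but $x(x-z_0)$ has degree $2\neq J(1)$. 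The clean choice inside $B_1$ is: $f_1 = (x - z_0)/(x-z_0) = 1$? No: numerator degree must be exactly $J(1)=1$. So take $f_1 = (x-z_1)/(x-z_0)$ and $f_2 = (x-z_2)/(x-z_0)$, both in $B_1$; then $f_1/f_2 = (x-z_1)/(x-z_2)\in K^{\rm gr}(B)$, and from this Möbius-type function together with constants one recovers $x$ by solving a linear equation. Equivalently, $(f_1 - f_2)$ is not directly available since $K^{\rm gr}(B)$ is only a field, not closed under the additive structure of $B$, but as a field $K^{\rm gr}(B)$ does contain $\mathbb{C}$ and the element $(x-z_1)/(x-z_2)$, and $x = \bigl(z_1 - z_2\cdot \tfrac{x-z_1}{x-z_2}\bigr)\big/\bigl(1 - \tfrac{x-z_1}{x-z_2}\bigr)$, so $x\in K^{\rm gr}(B)$.

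Thus the skeleton is: (i) define the evaluation homomorphism $K^{\rm gr}(B)\to\mathbb{C}(x)$ and check it is well-defined and injective; (ii) observe its image is a subfield of $\mathbb{C}(x)$ containing $\mathbb{C}$; (iii) exhibit two explicit elements of $B_1$ whose ratio is a non-constant Möbius function of $x$, and deduce $x$ lies in the image; (iv) conclude $\mathbb{C}(x)\subseteq K^{\rm gr}(B)$, hence equality. I do not expect any genuine obstacle here — the only thing to watch is the degree bookkeeping (numerators in $B_n$ must have degree \emph{exactly} $J(n)$, not at most $J(n)$), which forces one to use ratios of two degree-$J(1)$ numerators rather than the naive $x/1$. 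As a byproduct this confirms the hypothesis of Proposition \ref{criterion} that $K^{\rm gr}(B)$ is finitely generated over $\mathbb{C}$ of transcendence degree one, since $\mathbb{C}(x)$ visibly is.
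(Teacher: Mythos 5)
Your proposal takes exactly the same route as the paper: define the natural field homomorphism $\phi\colon K^{\rm gr}(B)\to\mathbb{C}(x)$ sending $\tfrac{f}{g}\mapsto\tfrac{P(x)}{Q(x)}$ (for $f=P/P_n$, $g=Q/P_n$ in $B_n$), observe that as a nonzero map of fields it is automatically injective, and then verify surjectivity by showing $x$ lies in the image. Steps (i) and (ii) of your skeleton match the paper's argument verbatim.

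The one place you diverge is in step (iii), and the divergence stems from a misreading of the definition of $B_n$. You interpret ``$Q(x)$ polynomial of degree $J(n)$'' as \emph{degree exactly} $J(n)$, and this forces you into the Möbius-function detour: you pick $f_1=(x-z_1)/(x-z_0)$, $f_2=(x-z_2)/(x-z_0)$ in $B_1$, form $w=f_1/f_2=(x-z_1)/(x-z_2)\in K^{\rm gr}(B)$, and solve $x=(z_1-z_2 w)/(1-w)$. That computation is correct, but it is unnecessary. The intended reading is \emph{degree at most} $J(n)$, as is forced by the rest of the paper: the approximability proof identifies $B_{np}$ with $\mathbb{C}_{J(pn)}[x]/P_{pn}$ and assigns it dimension $J(pn)+1$, which is the dimension of the space of polynomials of degree $\leq J(pn)$; under the ``exactly'' reading $B_n$ would not even be a vector space, so $B$ would not be a graded algebra. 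With the correct reading, $1/P_1(x)\in B_1$ (numerator $Q=1$ of degree $0\leq J(1)=1$), and the paper's surjectivity argument is the one-liner $\phi\!\left(\frac{x/P_1(x)}{1/P_1(x)}\right)=x$. So: your proof is correct and your instinct to worry about degree bookkeeping is reasonable, but the cleaner resolution is to note that ``exactly'' cannot be what is meant, rather than to work around it.
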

\begin{proof}
By definition, $K^{\rm gr}(B)=\left\{ \frac{f}{g}| f,g\in B_n\mbox{ for 
some } n \right\}$. There is a natural morphism of fields 
\[\phi:K^{\rm gr}(B)\rightarrow \mathbb{C}(x)\]
given by $\frac{ P(x)/ P_n(x)}{Q(x)/P_n(x)}\rightarrow P(x)/Q(x)$. Since 
$\phi\left(
\frac{x/ P_1(x)}{1/P_1(x)}\right)=x$, this map is surjective. As a morphism of 
fields, it is injective. This completes the proof of the lemma.
\end{proof}
However, if we set $b_1\in B_1=1$ then the set of valuations 
$\nu_{B, b_1}$ is infinite, since it contains $\nu_{z_i}$ for any choice of $i$.
\\ \\
But this now completes the proof of Theorem \ref{maintheorem}, since $B$ is approximable, but since the set $\nu_{B,b_1}$ is infinite, $B$ cannot be a graded sub-algebra of the space of sections of a big line bundle. 


\begin{thebibliography}{12}
\bibitem{bourbaki} Boucksom, S. {\it Corps d'Okounkov (d'apr\`es Okounkov,
Lazarsfeld-Mustata et Kaveh-Khovanskii)}. S\'eminaire Bourbaki no1059,
Octobre 2012.
\bibitem{volumes_chen} Chen, H. {\it Arithmetic Fujita Approximation.} Annales ENS,
{\bf 43}, fasc. 4 (2010) 555-578.
\bibitem{dBP} di Biagio, L., Pacienza, G., {\it Restricted volumes of effective
divisors}, Bull. Soc. Math. Fr. {\bf 144}, No. 2, 299-337 (2016).
\bibitem{fuj} Fujita, T.: {\it Approximating Zariski decomposition of big line 
bundles}. Kodai Math. J. 17(1), 1-3 (1994)
\bibitem{KK} Kaveh, K.; Khovanskii, A.; {\it Newton-Okounkov bodies, 
semigroups of integral points, graded algebras and intersection theory.} 
Ann. of Math. (2) {\bf 176} (2012), no. 2, 925-978.
\bibitem{LM} R. Lazarsfeld, M. Mustata, 
{\it Convex bodies associated to linear 
series}, Ann. Sci. Ecole Norm. Sup. (4) {\bf 42} (2009) 783-835. 
\bibitem{Okounkov} A. Okounkov, 
{\it Why would multiplicities be log-concave?} in: 
The Orbit Method in Geometry and Physics, in: Progr. Math., vol. 213, 2003, 
pp. 329-347.
\end{thebibliography}
\end{document}